\DeclareMathAlphabet{\mathpzc}{OT1}{pzc}{m}{it}
\newtheorem{theorem}{Theorem}[section]
\newtheorem{lemma}[theorem]{Lemma}
\newtheorem{corollary}[theorem]{Corollary}
\newtheorem{conjecture}[theorem]{Conjecture}
\theoremstyle{definition}
\newtheorem{definition}[theorem]{Definition}
\theoremstyle{remark}
\newtheorem{remark}[theorem]{Remark}
\numberwithin{equation}{section}
\begin{document}

\title{Topologically equisingular deformations of homogeneous hypersurfaces with line singularities are equimultiple} 

\author{Christophe Eyral}
\address{C. Eyral, Institute of Mathematics, Polish Academy of Sciences, \'Sniadeckich~8, 00-656 Warsaw, Poland}
\email{eyralchr@yahoo.com}

\subjclass[2010]{32S15, 32S25, 32S05.}

\keywords{Homogeneous hypersurfaces with line singularities; deformations with constant L\^e numbers; topological $\mathscr{V}$-equisingularity; equimultiplicity; Thom's $a_f$ condition; Zariski's multiplicity conjecture.}

\begin{abstract}
We prove that if $\{f_t\}$ is a family of line singularities with constant L\^e numbers and such that $f_0$ is a homogeneous polynomial, then $\{f_t\}$ is equimultiple. This extends to line singularities a well known theorem of A.~M. Gabri\`elov and A.~G. Ku\v{s}nirenko concerning isolated singularities. As an application, we show that if $\{f_t\}$ is a topologically $\mathscr{V}$-equisingular family of line singularities, with $f_0$ homogeneous, then $\{f_t\}$ is equimultiple. This provides a new partial positive answer to the famous Zariski multiplicity conjecture for a special class of non-isolated hypersurface singularities.
\end{abstract}

\maketitle

\markboth{C. Eyral}{Topologically equisingular deformations of homogeneous hypersurfaces with line singularities} 

\section{Introduction}\label{intro}

Let $\mathbf{z}:=(z_1,\ldots,z_n)$ be linear coordinates for $\mathbb{C}^n$ ($n\geq 2$), and let 
\begin{equation*}
f_0\colon (\mathbb{C}^n,\mathbf{0})\rightarrow (\mathbb{C},0),\quad 
\mathbf{z}\mapsto f_0(\mathbf{z}), 
\end{equation*}
be a \emph{homogeneous} polynomial function. We suppose that $f_0$ is reduced at $\mathbf{0}$. A deformation of $f_0$ is a polynomial function 
\begin{equation*}
f\colon (\mathbb{C} \times \mathbb{C}^n,\mathbb{C} \times \{\mathbf{0}\}) \rightarrow (\mathbb{C},0),\quad 
(t,\mathbf{z})\mapsto f(t,\mathbf{z}), 
\end{equation*}
such that the following two conditions hold:
\begin{enumerate} 
\item 
$f(0,\mathbf{z})=f_0(\mathbf{z})$ for any $\mathbf{z}\in\mathbb{C}^n$;
\item
if we write $f_t(\mathbf{z}):= f(t,\mathbf{z})$, then each $f_t$ is reduced at~$\mathbf{0}$.
\end{enumerate}
Thus a deformation of $f_0$ may be viewed as a $1$-parameter family of polynomial functions $f_t$ locally reduced at $\mathbf{0}$ and depending polynomially on the parameter~$t$. (Note that, by definition, $f_t(\mathbf{0}) = f(t,\mathbf{0})=0$ for any $t\in\mathbb{C}$.) In this paper, we~are interested in the embedded topology of the hypersurfaces $V(f_t):=f_t^{-1}(0)$ in a~neighbourhood of the origin $\mathbf{0}\in \mathbb{C}^n$ as the parameter $t$ varies from $t=t_0\not=0$ to $t=0$.
 
In \cite{GK}, A.~M. Gabri\`elov and A.~G. Ku\v{s}nirenko proved the following theorem.

\begin{theorem}[Gabri\`elov-Ku\v{s}nirenko]\label{gk}
Assume that $f_0$ is a homogeneous polynomial function with an isolated singularity at the origin. Also, suppose that the~family $\{f_t\}$ is a $\mu$-constant deformation of $f_0$---that is, for any parameter $t$ sufficiently small, the function $f_t$ has an isolated singularity at $\mathbf{0}$ and the Milnor number of $f_t$ at~$\mathbf{0}$ is independent of $t$. Under these conditions, the family $\{f_t\}$ is equimultiple---that is, the multiplicity of $f_t$ at $\mathbf{0}$ is independent of $t$ for all sufficiently small $t$.
\end{theorem}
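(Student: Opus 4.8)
My plan is to reduce the statement to a question about the $\mu$-constant stratum of the semiuniversal deformation of $f_0$, exploiting homogeneity to control that stratum. First I would record the two numerical consequences of homogeneity: if $f_0$ is homogeneous of degree $d$ with an isolated singularity, then its tangent cone is $f_0$ itself, so $\operatorname{mult}_{\mathbf 0}(f_0)=d$, while $\mu(f_0)=(d-1)^n$. Since the multiplicity (order of vanishing at $\mathbf 0$) is upper semicontinuous in a family, one obtains $m_t:=\operatorname{mult}_{\mathbf 0}(f_t)\le d$ for all sufficiently small $t$. Thus the entire content of the theorem is to exclude a strict drop $m_t<d$; note that a naive semicontinuity estimate only yields this one-sided bound and is therefore not enough.

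Next I would pass to deformation theory. Let $F(\mathbf z,\mathbf s)=f_0(\mathbf z)+\sum_j s_j\,m_j(\mathbf z)$ be a semiuniversal deformation of $f_0$, where $m_1,\dots,m_\mu$ is a monomial basis of the Milnor algebra $\mathcal O/J(f_0)$. Every deformation of $f_0$ is induced from $F$ by a base map, so there is a germ $\phi$ such that, for each small $t$, the function $f_t$ is right-equivalent to $F_{\phi(t)}$; as right equivalence is a local coordinate change it preserves the order at $\mathbf 0$, whence $m_t=\operatorname{mult}_{\mathbf 0}(F_{\phi(t)})$. Because $\{f_t\}$ is $\mu$-constant, the image of $\phi$ lies in the $\mu$-constant stratum $S_\mu\subseteq(\mathbb C^\mu,\mathbf 0)$. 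It therefore suffices to prove that every member $F_{\mathbf s}$ with $\mathbf s\in S_\mu$ still has multiplicity $d$ at $\mathbf 0$.

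The heart of the argument — and the step I expect to be the main obstacle — is the claim that, because $f_0$ is homogeneous, the stratum $S_\mu$ is contained in the subspace where every parameter $s_j$ attached to a monomial $m_j$ of degree $<d$ vanishes; equivalently, that switching on any deformation monomial of degree $<d$ strictly decreases the Milnor number. Granting this, each $F_{\mathbf s}$ with $\mathbf s\in S_\mu$ has no terms of degree $<d$, so its leading form has degree $d$ and $\operatorname{mult}_{\mathbf 0}(F_{\mathbf s})=d$; pulling back through $\phi$ gives $m_t=d$ for all small $t$, which is equimultiplicity. To establish the claim I would use the weighted structure that the $\mathbb C^\ast$-action scaling $\mathbf z$ induces on the base: the parameter $s_j$ receives weight $d-\deg m_j$, the stratum $S_\mu$ is invariant under the resulting flow, and a grading-plus-semicontinuity argument shows it cannot meet the positive-weight directions (those with $\deg m_j<d$). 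The delicate point is precisely to rule out that a cancellation among several lower-degree terms, switched on nonlinearly along a curved branch of $S_\mu$, could keep $\mu$ constant while lowering the tangent cone; it is here that the homogeneity of $f_0$ is indispensable.
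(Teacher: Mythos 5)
Your reduction steps are fine as far as they go: upper semicontinuity of the order gives $\operatorname{mult}_{\mathbf 0}(f_t)\le d$, versality lets you replace $f_t$ by $F_{\phi(t)}$ up to a right equivalence (which preserves the order), and since the $m_j$ are linearly independent monomials, $\operatorname{mult}_{\mathbf 0}(F_{\mathbf s})<d$ holds exactly when some coordinate $s_j$ with $\deg m_j<d$ is nonzero. But this last observation, combined with the curve selection lemma, means that your ``heart'' claim---$S_\mu\subseteq\{s_j=0:\deg m_j<d\}$---is \emph{equivalent} to the theorem, not a reduction of it; and the argument you offer for the claim (``grading plus semicontinuity'') does not exist as sketched. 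The $\mathbb C^\ast$-action on the base has weights $d-\deg m_j$, which are negative on the directions with $\deg m_j>d$, so orbits of a general point of $S_\mu$ do not stay in the germ as $\lambda\to 0$; and where the flow argument does apply, invariance of $S_\mu$ plus upper semicontinuity of $\mu$ yield only one-sided inequalities along degenerating orbits---exactly the same one-sided information you correctly dismissed as insufficient in the case of the multiplicity. Nothing in your sketch excludes the ``delicate point'' you yourself flag (cancellation among several low-degree terms along a curved branch of $S_\mu$); note that if soft semicontinuity arguments sufficed here, they would prove Teissier's conjecture with no homogeneity hypothesis, and that conjecture is open.

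What is missing is a strict, two-sided input, and in the Gabri\`elov--Ku\v{s}nirenko proof (which the paper runs in parallel for L\^e numbers in Section \ref{sect-pmt}) it comes from the L\^e--Saito theorem (Theorem \ref{ls}), for which your proposal has no substitute. Concretely: write $f=f_0+\sum_i p_i(t)g_i(\mathbf z)$ with $g_i$ homogeneous of degree $d_i$ and $p_i(t)=a_it^{k_i}+\cdots$, and extract the weighted-leading subfamily $h(t,\mathbf z)=f_0(\mathbf z)+\sum_{1\le i\le m}a_i t^{d-d_i}g_i(\mathbf z)$, which is homogeneous of degree $d$ \emph{jointly} in $(t,\mathbf z)$. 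If $\{h_t\}$ were $\mu$-constant, Thom's $a_f$ condition along the $t$-axis would force the strict inequality $\operatorname{ord}_0(\frac{\partial h}{\partial t}\circ\gamma)>\inf_j\operatorname{ord}_0(\frac{\partial h}{\partial z_j}\circ\gamma)$ for every test curve, but the curve $\gamma(u)=(u,u\mathbf z_0)$, with $\mathbf z_0$ chosen so that $\sum_i a_i(d-d_i)g_i(\mathbf z_0)\ne 0$, produces order $d-1$ on both sides---this is exactly where homogeneity of $f_0$ enters, and it is the step that kills the cancellation scenario. Upper semicontinuity then gives $\mu(h_{t_q})<\mu(f_0)$ along a sequence $t_q\to 0$, and the rescaling $\ell^q(s,\mathbf z)=s^{-\kappa d}f((s\tau_q)^\nu,s^\kappa\mathbf z)$, which is a polynomial family equal to $h_{t_q}$ at $s=0$ and to a coordinate rescaling of $f_{(s\tau_q)^\nu}$ for $s\ne 0$, transfers the drop in $\mu$ back to the original family, contradicting $\mu$-constancy. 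Until your proposal incorporates this Thom-condition mechanism (or an equivalent), the central claim about $S_\mu$ is asserted, not proved.
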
 

Here, by the \emph{multiplicity} of $f_t$ at $\mathbf{0}$ (denoted by $\mbox{mult}_{\mathbf{0}}(f_t)$) we mean the number of points of intersection near~$\mathbf{0}$ of~$V(f_t)$ with a generic line in~$\mathbb{C}^n$ passing arbitrarily close to, but not through, the origin. As $f_t$ is reduced at $\mathbf{0}$, this is also the order of~$f_t$ at~$\mathbf{0}$---that is, the lowest degree in the power series expansion of $f_t$ at $\mathbf{0}$. 

A key-point in the proof of Theorem \ref{gk} is the following theorem of L\^e D\~ung Tr\'ang and K. Saito (cf.~\cite{LS}).

\begin{theorem}[L\^e-Saito]\label{ls}
If the family $\{f_t\}$ is a $\mu$-constant family of isolated hypersurface singularities, then the $t$-axis $\mathbb{C}\times \{\mathbf{0}\}$ satisfies Thom's $a_f$ condition at the origin with respect to the ambient stratum---that is, if $\{\mathbf{p}_k\}$ is a sequence of points in $(\mathbb{C}\times \mathbb{C}^n)\setminus \Sigma f$ such that 
\begin{displaymath}
\mathbf{p}_k\rightarrow (0,\mathbf{0})
\quad\mbox{and}\quad
T_{\mathbf{p}_k}V(f-f(\mathbf{p}_k))\rightarrow T, 
\end{displaymath}
then $T_{(0,\mathbf{0})}(\mathbb{C}\times\{\mathbf{0}\}) = \mathbb{C}\times\{\mathbf{0}\} \subseteq T$. 
\end{theorem}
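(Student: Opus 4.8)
The plan is to verify Thom's $a_f$ condition directly from its definition, after translating it into a quantitative comparison of the partial derivatives of $f$. First I would record that, since each $f_t$ has an isolated singularity at $\mathbf{0}$ and $f(t,\mathbf{0})\equiv 0$, the critical locus $\Sigma f$ coincides with the $t$-axis $\mathbb{C}\times\{\mathbf{0}\}$. For a regular point $\mathbf{p}=(t,\mathbf{z})$ the tangent space $T_{\mathbf{p}}V(f-f(\mathbf{p}))$ is the kernel of $df_{\mathbf{p}}$, so its limit $T$ is the kernel of the limit $\omega$ of the normalized covectors $df_{\mathbf{p}_k}/\|df_{\mathbf{p}_k}\|$. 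Since $\|df\|^2=|\partial f/\partial t|^2+\|\nabla_{\mathbf{z}}f\|^2$, the inclusion $\mathbb{C}\times\{\mathbf{0}\}\subseteq T$ is equivalent to the vanishing of the $t$-component of $\omega$, i.e. to the estimate
\begin{equation*}
\frac{|\partial f/\partial t(\mathbf{p}_k)|}{\|\nabla_{\mathbf{z}}f(\mathbf{p}_k)\|}\longrightarrow 0
\end{equation*}
along every sequence $\mathbf{p}_k\to(0,\mathbf{0})$ lying off the $t$-axis. Thus the whole statement reduces to showing that $\partial f/\partial t$ is \emph{little-o} of $\nabla_{\mathbf{z}}f$ as one approaches the axis.

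Next I would bring in the $\mu$-constant hypothesis through conservation of the Milnor number. Fixing a small ball on which $f_0$ has $\mathbf{0}$ as its only critical point, conservation says that for small $t$ the critical points of $f_t$ in that ball carry total Milnor number $\mu(f_0)$; constancy of $\mu(f_t)=\mu(f_t,\mathbf{0})$ therefore forces $f_t$ to have no critical point other than $\mathbf{0}$ nearby. Equivalently, the relative polar curve
\begin{equation*}
\Gamma:=\overline{\{(t,\mathbf{z}):\nabla_{\mathbf{z}}f=\mathbf{0},\ \mathbf{z}\neq\mathbf{0}\}}
\end{equation*}
does not pass through the origin, so that $\nabla_{\mathbf{z}}f\neq\mathbf{0}$ off the axis near $(0,\mathbf{0})$ and the ratio above is at least well defined.

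The heart of the argument is to upgrade this qualitative statement (``no critical point escapes'') to the quantitative little-o estimate. Concretely, by the curve selection lemma the estimate fails only if there is a real-analytic arc $\gamma(s)=(t(s),\mathbf{z}(s))\to(0,\mathbf{0})$, with $\mathbf{z}(s)\neq\mathbf{0}$, along which $|\partial f/\partial t(\gamma(s))|\geq c\,\|\nabla_{\mathbf{z}}f(\gamma(s))\|$ for some $c>0$, so the goal is to rule out such arcs. The clean way to organize this is through the theory of the relative polar curve together with the principle of specialization of integral dependence: constancy of $\mu$ propagates to constancy of the relevant Teissier polar multiplicities along the axis, and this constancy forces $\partial f/\partial t$ to be dominated by the relative Jacobian ideal $(\partial f/\partial z_1,\ldots,\partial f/\partial z_n)$ in the strong sense required by the little-o comparison.

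I expect this last step to be the main obstacle. The reformulation of $a_f$ and the conservation-of-Milnor-number observation are routine, but bridging from the absence of escaping critical points to a \emph{uniform} comparison of $\partial f/\partial t$ against $\nabla_{\mathbf{z}}f$ --- equivalently, controlling the limiting conormal directions along the whole axis rather than fibrewise --- is exactly the delicate point that the relative polar curve and integral-dependence machinery (or the original estimates of L\^e and Saito) are designed to handle. A crude \L{}ojasiewicz bound does not suffice: mere integral dependence of $\partial f/\partial t$ on the Jacobian ideal would only give a bounded ratio, whereas $a_f$ demands that the ratio tend to $0$, and this extra gain must be extracted from the $\mu$-constant hypothesis itself.
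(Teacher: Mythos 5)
A preliminary remark: the paper contains no proof of Theorem \ref{ls} to compare against --- it is quoted from \cite{LS} and used only as background, its role in the paper's own argument being played by Massey's generalization (Theorem \ref{Massey}; for isolated singularities $\lambda^0_{f_t,\mathbf{z}}(\mathbf{0})=\mu(f_t)$ and the higher L\^e numbers vanish, so Theorem \ref{ls} is literally the isolated case of that result). So your attempt must be judged on its own merits, and judged that way it is a correct \emph{reduction}, not a proof. Your three preparatory steps are sound and standard: translating the $a_f$ condition into the little-o estimate $|\partial f/\partial t|=o(\|\nabla_{\mathbf{z}}f\|)$ near the axis; deducing from conservation of the Milnor number that $\mu$-constancy forces non-splitting, hence $\Sigma f=\mathbb{C}\times\{\mathbf{0}\}$ near the origin and emptiness of the relative polar curve; and invoking the curve selection lemma to reduce the failure of the estimate to the existence of a real-analytic arc along which $|\partial f/\partial t|\geq c\,\|\nabla_{\mathbf{z}}f\|$. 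But the step that constitutes the actual content of the L\^e--Saito theorem --- ruling out such arcs --- is exactly the one you leave open, and deferring to ``the original estimates of L\^e and Saito'' is citing the theorem being proved.

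Moreover, the bridge you do sketch is miscalibrated. The principle of specialization of integral dependence converts fibrewise integral dependence into dependence on the total space \emph{provided} the relevant multiplicities are constant along the axis; for the strict (little-o) conclusion the relevant ideal is $\mathfrak{m}\,J_{\mathbf{z}}(f)$ with $\mathfrak{m}=(z_1,\ldots,z_n)$, and constancy of its multiplicity unwinds, via Teissier's formulas, into constancy of the polar multiplicities, i.e.\ of the $\mu^{*}$-sequence. By the Brian\c{c}on--Speder examples, $\mu$-constancy does \emph{not} imply $\mu^{*}$-constancy, so your assertion that ``constancy of $\mu$ propagates to constancy of the relevant Teissier polar multiplicities'' is false as stated; run to completion, your plan proves Teissier's theorem ($\mu^{*}$-constant implies Whitney conditions, hence $a_f$), which has a strictly stronger hypothesis than Theorem \ref{ls}. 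The genuine L\^e--Saito argument gets the extra gain from $\mu$-constancy by a direct order comparison along the selected arc $\gamma(s)=(t(s),\mathbf{z}(s))$ --- comparing $\mathrm{ord}_s(f\circ\gamma)$, computed from $\frac{d}{ds}(f\circ\gamma)=t'\,(\partial f/\partial t\circ\gamma)+\sum_j z_j'\,(\partial f/\partial z_j\circ\gamma)$, with the orders of the individual partials, and exploiting $f(t,\mathbf{0})\equiv 0$ together with the emptiness of the polar curve to reach a contradiction --- and none of this appears in your proposal. To your credit, you correctly diagnose that bounded-ratio (integral closure of $J_{\mathbf{z}}$, or a crude \L{}ojasiewicz bound) is insufficient and that the strictness must come from $\mu$-constancy itself; but diagnosing the obstacle is not overcoming it, so the proposal has a genuine gap at precisely the heart of the theorem.
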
 

In this theorem, $f_0$ is not required to be homogeneous.
As usual, $T_{\mathbf{p}_k} V(f-f(\mathbf{p}_k))$ denotes the tangent space at $\mathbf{p}_k$ to the level hypersurface in $\mathbb{C}\times \mathbb{C}^{n}$ defined by $f(t,\mathbf{z})=f(\mathbf{p}_k)$, and $T_{(0,\mathbf{0})}(\mathbb{C}\times\{\mathbf{0}\})$ is the tangent space at $(0,\mathbf{0})$ to $\mathbb{C}\times\{\mathbf{0}\}$. The notation $\Sigma f$ stands for the critical set of $f$.

Theorem \ref{gk} partially answers the following conjecture of 
B. Teissier \cite{T}. 

\begin{conjecture}[Teissier]
Any $\mu$-constant family of isolated hypersurface singularities is equimultiple.
\end{conjecture}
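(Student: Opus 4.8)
The plan is to try to extract the invariance of the multiplicity directly from the invariance of the Milnor number, using the L\^e--Saito theorem as the analytic engine. Recall that the multiplicity $\mathrm{mult}_{\mathbf 0}(f_t)$ is, up to the additive constant $1$, the penultimate term $\mu^{(1)}(f_t)$ of Teissier's $\mu^*$-sequence $\mu^*(f_t)=(\mu^{(n)}(f_t),\ldots,\mu^{(0)}(f_t))$, where $\mu^{(i)}(f_t)$ denotes the Milnor number of the restriction of $f_t$ to a generic $i$-dimensional linear slice through $\mathbf 0$ (so $\mu^{(1)}(f_t)=\mathrm{mult}_{\mathbf 0}(f_t)-1$), while the top term $\mu^{(n)}(f_t)=\mu(f_t)$ is constant by hypothesis. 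By upper semicontinuity each $\mu^{(i)}(f_t)$ can only jump up at $t=0$, and the goal is to rule this out for $i=1$.

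The first temptation is to go through the full sequence: Teissier and L\^e--Teissier proved that $\mu^*$-constancy along the $t$-axis is equivalent to the Whitney conditions for the pair (smooth locus of the total space, $t$-axis), and Hironaka's theorem then yields equimultiplicity. This route is, however, blocked in general by the example of Brian\c con--Speder, which exhibits a $\mu$-constant family that is equimultiple yet fails to be $\mu^*$-constant (the term $\mu^{(n-1)}$ jumps). Thus $\mu^*$-constancy is strictly stronger than our hypothesis, and a proof of the conjecture cannot pass through it: one must obtain $\mu^{(1)}$-constancy while \emph{allowing} the intermediate terms to jump.

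The approach I would therefore pursue is to exploit Theorem~\ref{ls} directly. Since $\{f_t\}$ is $\mu$-constant, the $t$-axis satisfies Thom's $a_f$ condition at $(0,\mathbf 0)$, so every limit of tangent hyperplanes $T_{\mathbf p_k}V(f-f(\mathbf p_k))$ along sequences $\mathbf p_k\to(0,\mathbf 0)$ contains the $t$-direction. The idea is to encode these limits in the conormal space $C(f)\subseteq(\mathbb C\times\mathbb C^n)\times\check{\mathbb P}^{n}$ and to compare the fibre of $C(f)$ over $(0,\mathbf 0)$ with the fibre over a nearby point $(t,\mathbf 0)$: the multiplicity of $f_t$ is read off from the polar multiplicities carried by this fibre, and the $a_f$ condition forces the special fibre to lie inside the locus of hyperplanes containing the $t$-axis, constraining how it may degenerate as $t\to 0$. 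Concretely, one would invoke Teissier's principle of specialization of integral dependence to propagate the integral closure of the relative Jacobian ideal across the special fibre and try to deduce that the leading (multiplicity) polar invariant cannot drop.

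The crux—and the reason the conjecture remains open—is precisely this last comparison. Thom's $a_f$ condition controls only the \emph{limiting tangent hyperplanes}, i.e.\ the incidence of the conormal fibre with the $t$-axis; it is strictly weaker than the Whitney $b$ (or Verdier $w$) condition that would pin down the contact order needed to forbid a jump in multiplicity. Transferring control from the limiting hyperplanes to the numerical multiplicity \emph{without} the intermediate $\mu^*$-terms is the genuine obstacle, and it is exactly why the present paper settles for the special case in which $f_0$ is homogeneous: there the global structure of the tangent cone makes the $a_f$ information rigid enough to force equimultiplicity, whereas for arbitrary $f_0$ no general mechanism is known to close this gap.
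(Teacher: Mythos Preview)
This statement is a \emph{conjecture} in the paper, not a theorem: the paper gives no proof of it (indeed it is Teissier's well-known open conjecture), so there is nothing to compare your proposal against. Your write-up is not a proof either, and you say so yourself in the last paragraph: you correctly identify that the $a_f$ condition furnished by L\^e--Saito controls limiting tangent hyperplanes but is strictly weaker than what would be needed (Whitney $b$ or $\mu^*$-constancy) to force equimultiplicity in general, and you note that the Brian\c con--Speder example blocks the route through $\mu^*$-constancy. That diagnosis is accurate, but what you have written is an essay on why the conjecture is hard, not a proof attempt with a gap to be located.

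If the intent was to supply a proof, the genuine missing idea is exactly the one you flag: a mechanism to pass from Thom's $a_f$ condition (or from constancy of $\mu=\mu^{(n)}$ alone) to constancy of $\mu^{(1)}=\mathrm{mult}_{\mathbf 0}(f_t)-1$ while allowing the intermediate $\mu^{(i)}$ to jump. No such mechanism is supplied, and none is known; the conjecture remains open. The paper's contribution (Theorems~\ref{gk} and~\ref{mt}) is precisely to establish equimultiplicity under the additional hypothesis that $f_0$ is homogeneous, where the weighted-homogeneous rescaling trick of Gabri\`elov--Ku\v{s}nirenko converts the $a_f$ information into a contradiction with upper-semicontinuity.
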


Note that by the L\^e-Ramanujam theorem \cite{LR}, Teissier's conjecture is a special case---at least when $n\not=3$---of the famous Zariski multiplicity conjecture \cite{Z}.

\begin{conjecture}[Zariski's multiplicity conjecture]\label{zmc}
Any topologically $\mathscr{V}$-equisin\-gular family of (possibly non-isolated) hypersurface singularities is equimultiple.
\end{conjecture}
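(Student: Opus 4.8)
The plan is to attack Conjecture \ref{zmc} by transposing, to the world of non-isolated singularities, the chain of implications that settles the isolated case under homogeneity. There one passes from constancy of the embedded topological type to $\mu$-constancy (by L\^e-Ramanujam, at least for $n\neq 3$), from $\mu$-constancy to Thom's $a_f$ condition along the parameter axis $\mathbb{C}\times\{\mathbf{0}\}$ (Theorem \ref{ls}), and finally from the $a_f$ condition to equimultiplicity. This last implication is the content of Teissier's conjecture and is known unconditionally only in special situations---most notably when $f_0$ is homogeneous (Theorem \ref{gk}). The natural non-isolated substitute for the single number $\mu$ is the full collection of Massey's L\^e numbers, so I would route the argument through their constancy: first show that topological $\mathscr{V}$-equisingularity forces the L\^e numbers of $f_t$ at $\mathbf{0}$ to be independent of $t$, then show that constant L\^e numbers force $\mbox{mult}_{\mathbf{0}}(f_t)$ to be independent of $t$.

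For the first step I would try to exhibit each L\^e number as a topological invariant of the embedded germ $(V(f_t),\mathbf{0})\subseteq(\mathbb{C}^n,\mathbf{0})$. Although the L\^e numbers are defined as intersection multiplicities of the L\^e cycles against a generic flag, they are tightly bound to the reduced homology of the Milnor fibres of $f_t$ and of its successive generic hyperplane sections; the aim would be to recover these Betti-type quantities from the constant topological data carried by the $\mathscr{V}$-equisingularity homeomorphism, and hence to prevent them from jumping as $t$ varies. Granting such invariance, Massey's theory would then supply the $a_f$ condition along $\mathbb{C}\times\{\mathbf{0}\}$ together with the topological constancy of the Milnor fibrations, placing us in a situation formally parallel to the post-L\^e-Saito stage of the isolated argument.

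For the second step I would deploy the polar-variety and intersection-theoretic machinery underlying Theorem \ref{gk}: the $a_f$ condition together with constancy of the top L\^e number should control how $V(f_t)$ meets a generic line through points lying near, but distinct from, $\mathbf{0}$, so that $\mbox{mult}_{\mathbf{0}}(f_t)$ becomes expressible through quantities that are now constant in $t$. This is the non-isolated analogue of Teissier's conjecture, and it is the half of the problem on which concrete progress can realistically be made; indeed, the present paper establishes exactly this implication under the supplementary hypotheses that $f_0$ is homogeneous and that its singular locus is one-dimensional.

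The hard part---and the reason Conjecture \ref{zmc} stays open---is that \emph{both} steps are genuinely unresolved in general, the first being the more fundamental obstruction. A $\mathscr{V}$-equisingularity homeomorphism is only a homeomorphism of embedded pairs: it need not be differentiable, need not preserve any stratification, and in particular need not respect limits of tangent spaces, so there is no a priori mechanism by which it should transport analytic invariants such as the L\^e numbers. In fact the topological invariance of the L\^e numbers is itself open; and already in the isolated case the invariant that topology does control, namely $\mu$-constancy, is strictly weaker than Whitney equisingularity, which closes off the most direct passage from topology to multiplicity. The decisive difficulty is therefore the extraction of \emph{any} analytic equisingularity datum from pure topology in the non-isolated setting---a gap that the restricted results of this paper sidestep by assuming constancy of the L\^e numbers from the outset rather than deriving it.
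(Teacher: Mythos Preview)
The statement you are addressing is a \emph{conjecture}, not a theorem; the paper does not claim to prove it and offers no proof to compare against. Your proposal is, appropriately, a strategic discussion rather than a proof, and you correctly identify the two-step scheme (topology $\Rightarrow$ constancy of L\^e numbers $\Rightarrow$ equimultiplicity) that the paper exploits in its special case.

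Two corrections, however. First, you say that the topological invariance of the L\^e numbers ``is itself open.'' It is not open: it is known to be \emph{false} in general (the paper cites Fern\'andez de Bobadilla--Gaffney \cite{BG} for exactly this). So your proposed route through L\^e-number invariance cannot work for arbitrary non-isolated singularities; what survives is the possibility that in restricted settings the L\^e numbers are determined by other quantities that \emph{are} topological, which is precisely what happens for line singularities.

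Second, you write that the paper ``sidesteps'' the first step ``by assuming constancy of the L\^e numbers from the outset rather than deriving it.'' That is accurate for Theorem~\ref{mt}, but not for Corollary~\ref{mt3}, which is the paper's actual contribution to Conjecture~\ref{zmc}. There the paper \emph{does} derive $\lambda_{\mathbf{z}}$-constancy from topological $\mathscr{V}$-equisingularity: for line singularities, Massey shows that $\lambda_{\mathbf{z}}$-constancy is equivalent to the constancy of two genuinely topological invariants (the generic transversal Milnor number $\overset{\circ}{\mu}_{f_t}$ and the reduced Euler characteristic of the Milnor fibre), and these are preserved by the equisingularity homeomorphism. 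So in the line-singularity case your first step is not an obstruction at all; the restriction to $f_0$ homogeneous enters only in the second step.
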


Here, a family $\{f_t\}$ is said to be \emph{topologically $\mathscr{V}$-equisingular} if there exist open neighbourhoods $D$ and $U$ of the origins in $\mathbb{C}$ and $\mathbb{C}^n$, respectively, together with a continuous map $\varphi\colon (D\times U, D\times \{\mathbf{0}\})\rightarrow (\mathbb{C}^n,\mathbf{0})$ such that for all sufficiently small $t$, there is an open neighbourhood $U_t\subseteq U$ of $\mathbf{0}\in\mathbb{C}^n$ such that
the map $\varphi_t\colon (U_t,\mathbf{0})\rightarrow (\varphi(\{t\}\times U_t),\mathbf{0})$ defined by $\varphi_t(\mathbf{z}):=\varphi(t,\mathbf{z})$ is a homeomorphism sending $V(f_0)\cap U_t$ onto $V(f_t)\cap \varphi_t(U_t)$.

For a survey---up to 2007---on the Zariski multiplicity conjecture, we refer the reader to \cite{E2}. For more recent results and for a short introduction to equisingularity theory for non-isolated singularities, see \cite{EyBook}.

In the present paper, we investigate the same question as Gabri\`elov and Ku\v{s}nirenko for the simplest class of hypersurfaces with non-isolated singularities---namely, the hypersurfaces with \emph{line} singularities. Certainly, for such singularities, the Milnor number is no longer relevant. However, in \cite{M3,M4,M,M2}, D.~Massey introduced a series of polar invariants which generalizes to these singularities---actually to singularities of arbitrary dimension---the data contained by the Milnor number for an isolated singularity. These polar invariants are called the \emph{L\^e numbers}. Unlike the Milnor number, the L\^e numbers are not topological invariants (cf.~\cite{BG}). However, they still capture an important information about the singularity (see, e.g., \cite{Bo1,Bo2,BG,ER1,ER2,M7,M6,M3,M4,M,M2}). In the present work, we highlight new aspects of deformations with constant L\^e numbers---namely, we show that the theorem of Gabri\`elov and Ku\v{s}nirenko extends to line singularities provided that the constancy of the Milnor number is replaced by the constancy of the L\^e numbers. 

From now on, we suppose that $\{f_t\}$ is a family of line singularities. As in \cite[\S4]{M7}, by this we mean that for each $t$ near $0\in\mathbb{C}$ the singular locus $\Sigma f_t$ of $f_t$ near the origin $\mathbf{0}\in\mathbb{C}^n$ is given by the $z_1$-axis, and the restriction of $f_t$ to the hyperplane $V(z_1)$ defined by $z_1=0$ has an isolated singularity at the origin. Then, by \cite[Remark 1.29]{M}, the partition of $V(f_t)$ given by
\begin{equation*}
\mathscr{S}_t:=\bigl\{V(f_t)\setminus\Sigma f_t,\Sigma f_t\setminus\{\mathbf{0}\}, \{\mathbf{0}\}\bigr\}
\end{equation*}
is a \emph{good stratification} for $f_t$ in a neighbourhood of $\mathbf{0}$, and the hyperplane $V(z_1)$ is a \emph{prepolar} slice for $f_t$ at $\mathbf{0}$ with respect to $\mathscr{S}_t$ for all small $t$. In particular, combined with \cite[Proposition~1.23]{M}, this implies that the L\^e numbers $\lambda^0_{f_t,\mathbf{z}}(\mathbf{0})$ and $\lambda^1_{f_t,\mathbf{z}}(\mathbf{0})$ of $f_t$ at~$\mathbf{0}$ with respect to the coordinates~$\mathbf{z}$ do exist. (For the definitions of the L\^e numbers, good stratifications and prepolarity, we refer the reader to \cite[Chapter~1]{M}.)
Note that for line singularities, the only possible non-zero L\^e numbers are precisely $\lambda^0_{f_t,\mathbf{z}}(\mathbf{0})$ and $\lambda^1_{f_t,\mathbf{z}}(\mathbf{0})$; all the other L\^e numbers $\lambda^k_{f_t,\mathbf{z}}(\mathbf{0})$ for $2\leq k\leq n-1$ are defined and equal to zero (cf.~\cite{M}).

\begin{definition}
We say that the family $\{f_t\}$ is \emph{$\lambda_{\mathbf{z}}$-constant}---or $\lambda$-constant with respect to the coordinates $\mathbf{z}$---if for all sufficiently small $t$, the L\^e numbers
\begin{equation*}
\lambda^0_{f_t,\mathbf{z}}(\mathbf{0})
\quad\mbox{and}\quad
\lambda^1_{f_t,\mathbf{z}}(\mathbf{0})
\end{equation*}
of $f_t$ at~$\mathbf{0}$ with respect to~$\mathbf{z}$ are independent of $t$. 
\end{definition}

Requiring ``$\lambda_{\mathbf{z}}$-constant'' in a family of line singularities is a generalization of assuming ``$\mu$-constant'' in a family with isolated singularities (cf.~\cite{M}). 

Our generalization of the Gabri\`elov-Ku\v{s}nirenko theorem is the following.

\begin{theorem}\label{mt}
Suppose that $\{f_t\}$ is a family of line singularities. Also, suppose that the polynomial function $f_0$ is homogeneous. Under these assumptions, if, furthermore, the family $\{f_t\}$ is $\lambda_{\mathbf{z}}$-constant, then it is equimultiple.
\end{theorem}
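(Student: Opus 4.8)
The plan is to mimic the structure of the Gabri\`elov-Ku\v{s}nirenko argument, replacing the Milnor number by the pair of L\^e numbers and replacing the L\^e-Saito theorem by its non-isolated analogue. The overall strategy proceeds by contradiction: I would suppose that $\{f_t\}$ is not equimultiple and derive a violation of Thom's $a_f$ condition along the $t$-axis, which in turn contradicts the constancy of the L\^e numbers. The homogeneity of $f_0$ is the crucial ingredient that lets one read off the multiplicity $d:=\mathrm{mult}_{\mathbf 0}(f_0)=\deg f_0$ and compare it against $\mathrm{mult}_{\mathbf 0}(f_t)$ for small $t\neq 0$; since multiplicity can only drop in a special fibre of a deformation, one always has $\mathrm{mult}_{\mathbf 0}(f_t)\geq \mathrm{mult}_{\mathbf 0}(f_0)$ for the homogeneous central fibre, so the only way equimultiplicity can fail is if the multiplicity jumps up for $t\neq 0$, and the bulk of the work is to rule this out.

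\smallskip

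First I would establish the analogue of the L\^e-Saito theorem in this setting, namely that $\lambda_{\mathbf z}$-constancy in a family of line singularities forces the $t$-axis $\mathbb C\times\{\mathbf 0\}$ to satisfy Thom's $a_f$ condition at the origin (with respect to the smooth ambient stratum). Here one expands the good stratification $\mathscr S_t$ of each $f_t$ into a good stratification $\mathscr G$ of the total map $f$ on $\mathbb C\times\mathbb C^n$ whose strata include the $t$-axis, the relative singular locus swept out by the $z_1$-axes, and the smooth part of $V(f)$; the content of Massey's theory (in the papers cited in the excerpt) is that constancy of the L\^e numbers $\lambda^0_{f_t,\mathbf z}(\mathbf 0)$ and $\lambda^1_{f_t,\mathbf z}(\mathbf 0)$ is exactly what upgrades $\mathscr G$ to an $a_f$ stratification, so the $t$-axis acquires the limiting-tangent property stated in Theorem~\ref{ls}. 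I would cite Massey's results on families with constant L\^e numbers to supply this step rather than reprove it from scratch.

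\smallskip

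Next, assuming for contradiction that $m:=\mathrm{mult}_{\mathbf 0}(f_{t})>d$ for a sequence of small $t\to 0$, I would exploit homogeneity of $f_0$ by writing $f_0$ as its own leading form and producing, for each such $t$, a point $\mathbf p_k=(t_k,\mathbf z_k)\to(0,\mathbf 0)$ at which the tangent hyperplane $T_{\mathbf p_k}V(f-f(\mathbf p_k))$ limits to a hyperplane $T$ \emph{not} containing the $t$-direction. The mechanism is the classical one from \cite{GK}: one chooses the approach along a curve transverse to $\mathbb C\times\{\mathbf 0\}$ so that the gradient $\nabla f$ at $\mathbf p_k$ is controlled by the leading (degree-$d$) behaviour in the $\mathbf z$-variables, and the jump in multiplicity forces the $\partial/\partial t$-component of $\nabla f$ to dominate the spatial components, so that the limiting tangent space is perpendicular to $T_{(0,\mathbf 0)}(\mathbb C\times\{\mathbf 0\})$; this contradicts the $a_f$ condition just established and finishes the argument. \textbf{The hard part} is this last step: the isolated-singularity proof of \cite{GK} uses the curve-selection lemma on the (one-dimensional) critical locus and Milnor-fibration transversality, and for line singularities I must carry the argument out relative to the one-dimensional singular stratum $\Sigma f$, performing curve selection on $(\mathbb C\times\mathbb C^n)\setminus\Sigma f$ while keeping track of the prepolar slice $V(z_1)$ so that the leading-form estimates controlling $\nabla f_t$ survive the presence of the non-isolated locus. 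Verifying that homogeneity of $f_0$ still pins down the leading form along such curves—and that the L\^e-number constancy rules out the multiplicity jump through the $a_f$ property—is where the genuine new content lies.
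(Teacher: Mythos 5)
There is a genuine gap, in fact two. First, your reduction is based on a semicontinuity statement that is backwards. Writing $f(t,\mathbf z)=f_0(\mathbf z)+\sum_i p_i(t)g_i(\mathbf z)$ with $p_i(0)=0$ and $g_i$ homogeneous of degree $d_i$, the multiplicity of the \emph{special} fibre is the largest one: failure of equimultiplicity means that some term with $d_i<d:=\deg f_0$ survives for $t\neq 0$, so that $\mathrm{mult}_{\mathbf 0}(f_t)<d$ for all small $t\neq 0$ outside a finite set. A jump \emph{up}, $\mathrm{mult}_{\mathbf 0}(f_t)>d$ along a sequence $t\to 0$, is exactly what cannot happen generically, so the phenomenon your ``hard part'' sets out to rule out does not occur, and the actual failure mode (a drop of multiplicity for $t\neq 0$) is left untreated by your scheme.

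Second, and more structurally, the plan ``non-equimultiple $\Rightarrow$ bad limiting tangent plane for $f$ itself $\Rightarrow$ contradiction with Massey's $a_f$ theorem'' is not available, and this is precisely why the Gabri\`elov--Ku\v{s}nirenko argument (and the paper) takes a detour. Consider $f=f_0+t^kg$ with $\deg g=d-1$ and $k$ large: the family is not equimultiple, yet along any curve $\gamma(u)=(u^a,u^b\mathbf z_0)$ one checks that violating the order inequality
$\mathrm{ord}_0(\partial f/\partial t\circ\gamma)>\inf_j\mathrm{ord}_0(\partial f/\partial z_j\circ\gamma)$
forces $k\leq d-\deg g$, so no such curve exists when $k$ is large; the low-order terms enter with high $t$-order and do not make $\partial f/\partial t$ dominate. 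The paper instead extracts from $f$ an auxiliary family $h(t,\mathbf z)=f_0(\mathbf z)+\sum_{i\leq m}a_it^{d-d_i}g_i(\mathbf z)$ built from the terms realizing $\nu=\sup_i\kappa(d-d_i)/k_i$; this $h$ is jointly homogeneous of degree $d$ in $(t,\mathbf z)$, so along $\gamma(u)=(u,u\mathbf z_0)$ the order of $\partial h/\partial t\circ\gamma$ equals $d-1$ and does not exceed the spatial orders, whence $a_f$ fails for $h$ and, by Massey's theorem, $\{h_t\}$ is not $\lambda_{\mathbf z}$-constant. The resulting drop of L\^e numbers is then transported back to the original family via the rescaling $\ell^q(s,\mathbf z):=s^{-\kappa d}f((s\tau_q)^\nu,s^\kappa\mathbf z)$, which interpolates between $h_{t_q}$ at $s=0$ and coordinate-scaled copies of $f_{(s\tau_q)^\nu}$ for $s\neq 0$, together with the \emph{lexicographic} upper semicontinuity of the pair $(\lambda^1,\lambda^0)$ --- individual semicontinuity of each L\^e number is false, and the paper's Lemma \ref{claim} handles the resulting case analysis. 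Your proposal contains none of these ingredients (auxiliary weighted-homogeneous family, rescaling transfer, lexicographic semicontinuity), nor the verification (Lemma \ref{clnde}) that the L\^e numbers of the auxiliary family exist with respect to the \emph{fixed} coordinates $\mathbf z$, which is the one place where the restriction to line singularities is actually used.
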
 

\begin{remark}
Assuming that the family $\{f_t\}$ is $\lambda_{\mathbf{z}}$-constant is not too strong in the sense that it does not imply the Whitney conditions along the $t$-axis (cf.~\cite{M7,M4}). (If these conditions were satisfied, then the theorem would immediately follow from a theorem of H. Hironaka \cite{H} which says that any reduced complex analytic space endowed with a Whitney stratification  is equimultiple along every stratum.)
\end{remark}

Theorem \ref{mt} is proved in Section \ref{sect-pmt}. 
A key-point in the proof is the following theorem of Massey (see \cite[Theorem 4.5]{M6} and \cite[Theorem 6.5]{M}) which extends to non-isolated singularities the L\^e-Saito theorem mentioned above.

\begin{theorem}[Massey]\label{Massey}
If $\{f_t\}$ is a $\lambda_{\mathbf{z}}$-constant family of (possibly non-isolated) hypersurface singularities, then the $t$-axis satisfies Thom's $a_f$ condition at the origin with respect to the ambient stratum.
\end{theorem}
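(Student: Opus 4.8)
The plan is to establish Thom's $a_f$ condition through its infinitesimal reformulation as a gradient estimate, and then to feed in the constancy of the L\^e numbers by means of the L\^e-cycle calculus. First I would reduce the condition to a statement along a single analytic arc. Write $Y:=\mathbb{C}\times\{\mathbf{0}\}$ for the $t$-axis and set $\nabla_{\mathbf{z}}f:=(\partial f/\partial z_1,\ldots,\partial f/\partial z_n)$. Off $\Sigma f$, the hyperplane $T_{\mathbf{p}}V(f-f(\mathbf{p}))$ is the kernel of $df_{\mathbf{p}}$, whose conormal direction is $[\partial f/\partial t:\partial f/\partial z_1:\cdots:\partial f/\partial z_n]\in\mathbb{P}^n$; the vector $(1,\mathbf{0})$ spanning $T_{(0,\mathbf{0})}Y$ lies in the limit $T$ precisely when the $t$-coordinate of the limiting conormal vanishes. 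Hence, by compactness of the relevant Grassmannian together with the curve selection lemma, $a_f$ along $Y$ at the origin is equivalent to the estimate $|\partial f/\partial t|=o(\|\nabla_{\mathbf{z}}f\|)$ along every real analytic arc $\mathbf{p}(s)\to(0,\mathbf{0})$ in $(\mathbb{C}\times\mathbb{C}^n)\setminus\Sigma f$.

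Next I would translate this estimate into integral-closure language. By the L\^e-Teissier dictionary between \L{}ojasiewicz inequalities and integral dependence, the strict domination $|\partial f/\partial t|=o(\|\nabla_{\mathbf{z}}f\|)$ as $\mathbf{p}\to Y$ is equivalent to an integral-dependence relation for $\partial f/\partial t$ over the product ideal $I_Y\cdot J$, where $J:=(\partial f/\partial z_1,\ldots,\partial f/\partial z_n)$ is the relative Jacobian ideal and $I_Y$ is the ideal of the $t$-axis, the relation being required at each point of $Y$ near the origin. This recasts $a_f$ as a numerical integral-dependence condition along $Y$, which is amenable to the conservation-of-number techniques available for families.

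The crux is then to produce this integral-dependence relation out of the hypothesis, and here I would invoke Massey's L\^e-cycle calculus. The L\^e numbers $\lambda^k_{f_t,\mathbf{z}}(\mathbf{0})$ are the multiplicities at $\mathbf{0}$ of the L\^e cycles $\Lambda^k_{f_t,\mathbf{z}}$, built inductively from the relative polar varieties $\Gamma^k_{f_t,\mathbf{z}}$ and the Jacobian ideal $J$ (cf.~\cite{M}). Working with the total map $f$ on $\mathbb{C}\times\mathbb{C}^n$ and applying the principle of specialization of integral dependence---the numerical criterion that an element is integral over an ideal exactly when adjoining it leaves the relevant polar multiplicities unchanged---I would show that the strict domination can fail along some arc only if one of the relative polar varieties $\Gamma^k_{f,\mathbf{z}}$, or one of the L\^e cycles $\Lambda^k_{f,\mathbf{z}}$, of the \emph{total} function acquires a component contained in $Y$, that is, a ``vertical'' component over the parameter. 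By upper semicontinuity and conservation of number, such a vertical component would force a strict jump $\lambda^k_{f_0,\mathbf{z}}(\mathbf{0})>\lim_{t\to0}\lambda^k_{f_t,\mathbf{z}}(\mathbf{0})$ in some L\^e number as $t\to0$. The $\lambda_{\mathbf{z}}$-constancy hypothesis rules this out, so no vertical component exists, the integral-dependence relation holds, and $a_f$ follows.

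The main obstacle I anticipate is the third step: setting up the precise dictionary between the tangent-hyperplane formulation of $a_f$ and the multiplicities of the L\^e cycles, and verifying that $\lambda_{\mathbf{z}}$-constancy is \emph{exactly} the numerical condition forcing integral dependence. This is the technical heart of Massey's theory; it requires the L\^e-Iomdine-Massey formulas relating the L\^e numbers of $f$ to those of its generic hyperplane slices, the interpretation of the L\^e numbers as polar multiplicities, and a careful conservation-of-number argument controlling how the L\^e cycles of the total function $f$ specialize to those of $f_0$. The generalization of the original L\^e-Saito argument \cite{LS}---from the single Milnor number to the entire collection of L\^e numbers---is precisely what must be carried out here.
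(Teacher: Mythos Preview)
The paper does not prove this statement. Theorem~\ref{Massey} is quoted as a result of Massey, with explicit references to \cite[Theorem 4.5]{M6} and \cite[Theorem 6.5]{M}, and is used as a black box in the proof of Theorem~\ref{mt}. There is therefore no proof in the paper to compare your proposal against.

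As for the proposal itself: what you have written is a coherent outline of the standard strategy, and the first two reductions (curve-selection to the gradient estimate, then integral-closure reformulation) are correct and well known. Your third step, however, is not a proof but a description of what a proof would have to accomplish, and you say as much yourself. The actual argument in \cite{M6,M} does not pass through the Teissier-style integral-closure formalism you sketch; Massey works directly with the relative polar curve $\Gamma^1_{f,(t,\mathbf{z})}$ of the total function $f$ and shows, via an intersection-number computation and the constancy of the L\^e numbers, that this polar curve has no component lying inside the $t$-axis, from which $a_f$ follows by a direct estimate. So your route is in the right spirit but is both more abstract and less complete than Massey's original; if you want to fill it in, the shortest path is to abandon the integral-closure detour and argue directly on the relative polar curve as in \cite{M6} or \cite[Chapter~6]{M}.
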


In this theorem, the singular locus $\Sigma f_t$ of $f_t$ is not required to be $1$-dimensional at $\mathbf{0}$. If $s:=\dim_{\mathbf{0}} \Sigma f_0$, then we say that the family $\{f_t\}$ is $\lambda_{\mathbf{z}}$-constant if for all sufficiently small $t$, the L\^e numbers
\begin{equation*}
\lambda^0_{f_t,\mathbf{z}}(\mathbf{0}),
\ldots,
\lambda^{s}_{f_t,\mathbf{z}}(\mathbf{0})
\end{equation*}
of $f_t$ at~$\mathbf{0}$ with respect to~$\mathbf{z}$ are defined and independent of $t$. (Again, for $s+1\leq k\leq n-1$, $\lambda^k_{f_t,\mathbf{z}}(\mathbf{0})=0$, see\cite{M}.)

Theorem \ref{mt} has the following important corollary which provides a new partial positive answer to the Zariski multiplicity conjecture for a special class of non-isolated singularities. 

\begin{corollary}\label{mt3}
Suppose that $\{f_t\}$ is a family of line singularities. Also, suppose that the polynomial function $f_0$ is homogeneous. Under these assumptions, if, furthermore, the family $\{f_t\}$ is topologically $\mathscr{V}$-equisingular, then it is equimultiple.
\end{corollary}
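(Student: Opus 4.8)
The plan is to deduce Corollary~\ref{mt3} from Theorem~\ref{mt} by showing that, for a family of line singularities, topological $\mathscr{V}$-equisingularity forces the two possibly non-zero Lê numbers $\lambda^0_{f_t,\mathbf{z}}(\mathbf{0})$ and $\lambda^1_{f_t,\mathbf{z}}(\mathbf{0})$ to be independent of $t$; once $\lambda_{\mathbf{z}}$-constancy is established, Theorem~\ref{mt} yields equimultiplicity immediately. It is worth stressing that this \emph{cannot} be obtained by invoking a general ``topological invariance of the Lê numbers'': indeed, by \cite{BG} the Lê numbers are not topological invariants. The argument must therefore exploit the special feature of line singularities, namely that there are only two Lê numbers in play and that each of them can be recovered from genuinely topological data.

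First I would handle $\lambda^1_{f_t,\mathbf{z}}(\mathbf{0})$. For a line singularity the first Lê cycle is carried by the singular line $\Sigma f_t$ with multiplicity equal to the Milnor number $\mu^{\perp}_{t}$ of the transverse slice at a generic point of $\Sigma f_t$, so that $\lambda^1_{f_t,\mathbf{z}}(\mathbf{0})=\mu^{\perp}_{t}$ (cf.~\cite{M}). The homeomorphisms $\varphi_t$ provided by the topological $\mathscr{V}$-equisingularity carry the origin to the origin and the topologically characterised singular locus of $V(f_0)$ onto that of $V(f_t)$, hence map $\Sigma f_0\setminus\{\mathbf{0}\}$ onto $\Sigma f_t\setminus\{\mathbf{0}\}$. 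Near a generic point $\mathbf{p}$ of $\Sigma f_0$ sufficiently close to $\mathbf{0}$, the restriction of $\varphi_t$ is an embedded homeomorphism of germs of the ambient pair, so the transverse isolated singularities of $f_0$ and of $f_t$ share the same embedded topological type; since the Milnor number of an isolated hypersurface singularity is a topological invariant, $\mu^{\perp}_{t}=\mu^{\perp}_{0}$, and therefore $\lambda^1_{f_t,\mathbf{z}}(\mathbf{0})$ is independent of $t$.

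Next I would pin down $\lambda^0_{f_t,\mathbf{z}}(\mathbf{0})$ by means of Massey's formula for the Euler characteristic of the Milnor fibre $F_{f_t,\mathbf{0}}$ of $f_t$ at $\mathbf{0}$. For a line singularity this formula reads $\widetilde{\chi}(F_{f_t,\mathbf{0}})=(-1)^{n-1}\bigl(\lambda^0_{f_t,\mathbf{z}}(\mathbf{0})-\lambda^1_{f_t,\mathbf{z}}(\mathbf{0})\bigr)$, so that $\lambda^0_{f_t,\mathbf{z}}(\mathbf{0})-\lambda^1_{f_t,\mathbf{z}}(\mathbf{0})=(-1)^{n-1}\widetilde{\chi}(F_{f_t,\mathbf{0}})$. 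The reduced Euler characteristic of the Milnor fibre at the origin is determined by the embedded topological type of the germ $(\mathbb{C}^n,V(f_t),\mathbf{0})$, which is constant along the family by hypothesis; hence $\widetilde{\chi}(F_{f_t,\mathbf{0}})$ does not depend on $t$. Combining this with the constancy of $\lambda^1_{f_t,\mathbf{z}}(\mathbf{0})$ obtained above, I conclude that $\lambda^0_{f_t,\mathbf{z}}(\mathbf{0})$ is constant as well. Thus $\{f_t\}$ is $\lambda_{\mathbf{z}}$-constant, and Theorem~\ref{mt} applies to give equimultiplicity.

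The main obstacle is the justification of the two topological-invariance claims under the comparatively weak hypothesis in the definition of topological $\mathscr{V}$-equisingularity, which only supplies an ambient homeomorphism of the zero loci. The delicate points are: (i) that $\varphi_t$ preserves the singular locus and the transverse topological type at generic points of the line, so that the transverse Milnor number is genuinely a topological invariant of the family; and (ii) that the reduced Euler characteristic of the Milnor fibre at $\mathbf{0}$ is a topological invariant of the embedded germ. Both are standard but must be invoked with care—especially since, as recalled above, the Lê numbers themselves are \emph{not} topological invariants, and it is precisely the low dimension of the singular locus that makes the two independent topological constraints suffice. Everything else is a formal consequence of Massey's Lê-cycle formalism and of Theorem~\ref{mt}.
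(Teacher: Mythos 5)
Your overall strategy is exactly that of the paper: reduce topological $\mathscr{V}$-equisingularity to $\lambda_{\mathbf{z}}$-constancy by showing that the two relevant invariants---the generic transversal Milnor number $\overset{\circ}{\mu}_{f_t}$ (which equals $\lambda^1_{f_t,\mathbf{z}}(\mathbf{0})$ here) and the reduced Euler characteristic of the Milnor fibre at $\mathbf{0}$---are constant in the family, and then invoke Theorem \ref{mt}. The paper simply cites \cite[\S 4]{M7} for the equivalence ``$\lambda_{\mathbf{z}}$-constant $\Leftrightarrow$ $\overset{\circ}{\mu}_{f_t}$ and $\widetilde{\chi}(F_{f_t,\mathbf{0}})$ constant'' (which you correctly re-derive from $\lambda^1=\overset{\circ}{\mu}$ and Massey's formula $\widetilde{\chi}(F_{f_t,\mathbf{0}})=(-1)^{n-1}\bigl(\lambda^0_{f_t,\mathbf{z}}(\mathbf{0})-\lambda^1_{f_t,\mathbf{z}}(\mathbf{0})\bigr)$), and cites \cite[\S 1]{M7} for the fact that $\overset{\circ}{\mu}_{f_t}$ is an invariant of the ambient topological type of $V(f_t)$ at $\mathbf{0}$.

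It is precisely at this last point that your self-contained substitute has a genuine gap. You assert that the homeomorphisms $\varphi_t$ ``carry the topologically characterised singular locus of $V(f_0)$ onto that of $V(f_t)$,'' but the algebraic singular locus is \emph{not} characterised by the topology of $V(f_t)$ alone: for $f=z_2^2+z_3^3$ in $\mathbb{C}^3$, the set $V(f)\cong \mathbb{C}\times\{\text{cusp}\}$ is a topological manifold, homeomorphic to $\mathbb{C}^2$, although $\Sigma f$ is the whole $z_1$-axis; nothing visible in $V(f)$ as a space pins down that line. What saves the statement is that $\varphi_t$ is a homeomorphism of \emph{pairs}, together with the non-trivial theorem (due to A'Campo and L\^e, via the Lefschetz number of the monodromy combined with the topological invariance of the local Milnor fibrations) that a hypersurface germ whose embedded pair is homeomorphic to $(\mathbb{C}^n,\mathbb{C}^{n-1})$ is smooth---and, further, the identification of $\overset{\circ}{\mu}$ with a Betti number of the local Milnor fibre of $f_t$ at a generic point of the line, which is again an invariant of the local embedded pair. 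Your remark that this is ``standard but must be invoked with care'' does not discharge it, and as written the step would fail for the reason above. The repair is immediate: either cite \cite[\S 1]{M7}, as the paper does, where exactly this invariance of $\overset{\circ}{\mu}$ for one-dimensional singular loci is proved, or supply the A'Campo--L\^e argument explicitly. With that citation in place, the rest of your argument (including the Euler-characteristic step, which is indeed standard) is correct and coincides with the paper's proof.
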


\begin{proof}
From \cite[\S 4]{M7}, we know that the family $\{f_t\}$ is $\lambda_{\mathbf{z}}$-constant if and only if the Milnor number $\overset{\circ}{\mu}_{f_t}$ of a generic hyperplane slice of $f_t$ at a point on $\Sigma f_t$ sufficiently close to the origin and the reduced Euler characteristic of the Milnor fibre of $f_t$ at~$\mathbf{0}$ are both independent of $t$ for all small $t$. We also know that for line singularities,  $\overset{\circ}{\mu}_{f_t}$ is an invariant of the ambient topological type of $V(f_t)$ at $\mathbf{0}$ (cf.~\cite[\S 1]{M7}).
As the reduced Euler characteristic is of course a topological invariant too, and since our family $\{f_t\}$ is topologically $\mathscr{V}$-equisingular, it follows that $\{f_t\}$ is $\lambda_{\mathbf{z}}$-constant. Now we apply Theorem \ref{mt}.
\end{proof}

\section{Proof of Theorem \ref{mt}}\label{sect-pmt}

It is along the same lines as the proof of Theorem \ref{gk} given by A. M. Gabri\`elov and A. G. Ku\v{s}nirenko in \cite{GK} with two differences related to the fact that the singular set $\Sigma f_t$ is no longer zero-dimensional at $\mathbf{0}$. The first difference is that, unlike the Milnor number, the L\^e numbers depend on the choice of the coordinates, and it is possible that for a prescribed coordinates system, the L\^e numbers with respect to this system do not even exist. Actually, this is the only reason for the restriction of our study to line singularities (cf.~Lemma \ref{clnde}). 
The second difference concerns ``upper-semicontinuity.'' While the Milnor number in a family is upper-semicontinuous, the L\^e numbers need not be individually upper-semi\-continuous. They are only \emph{lexigraphically} upper-semi\-continuous (cf.~\cite[Example 2.10 and Corollary~4.16]{M}). Though this makes the argument a bit more complicated than in \cite{GK}, we shall see that it is enough to conclude. Finally, as mentioned in the introduction, let us recall that a key point in the proof is the theorem of Massey (Theorem \ref{Massey}) which plays a similar role as that of the L\^e-Saito theorem (Theorem~\ref{ls}) used by Gabri\`elov and Ku\v{s}nirenko in the case of isolated singularities.

We argue by contradiction. Write
\begin{equation*}
f(t,\mathbf{z})=f_0(\mathbf{z})+\sum_{1\leq i\leq r} p_i(t) g_i(\mathbf{z}),
\end{equation*}
where $g_i$ is a homogeneous polynomial of the variables $\mathbf{z}:=(z_1,\ldots,z_n)$, $p_i$ is a polynomial of the variable $t$, and $r$ is an integer. Set $d:=\deg(f_0)$ and $d_i:=\deg(g_i)$.
If the family $\{f_t\}$ is not equimultiple, then the map
\begin{equation*}
(t,\mathbf{z})\mapsto \sum_{{1\leq i\leq r}\atop {d_i<d}} p_i(t) g_i(\mathbf{z}) 
\end{equation*}
is not identically zero in an arbitrarily small neighbourhood of the origin. 
Write the polynomial $p_i(t)$ as 
\begin{equation*}
p_i(t)=a_i t^{k_i} + b_i t^{k_i+1} + \mbox{higher-order terms},
\end{equation*} 
where $k_i>0$ and $a_i,b_i$ are constants, and set
\begin{equation*}
\nu:=\sup_{1\leq i\leq r} \frac{\kappa(d-d_i)} {k_i},
\end{equation*}
where $\kappa$ is the lowest common multiple of $k_1,\ldots, k_r$.
Clearly, $\nu$ is a positive integer. By reordering, we may assume that there exists an integer $m$, $1\leq m\leq r$, such that $\kappa(d-d_i)/k_i=\nu$ for $1\leq i\leq m$ and $\kappa(d-d_i)/k_i<\nu$ for $m+1\leq i\leq r$. Then, consider the polynomial function
\begin{equation*}
h(t,\mathbf{z}):=f_0(\mathbf{z}) + \sum_{1\leq i\leq m} a_i t^{d-d_i} g_i(\mathbf{z}).
\end{equation*}
As usual, we write $h_{t}(\mathbf{z}):=h(t,\mathbf{z})$. Note that $h_0=f_0$. 

\begin{lemma}\label{clnde}
For any sufficiently small $t$, the L\^e numbers of the function $h_t$ at the origin with respect to the prescribed coordinates system $\mathbf{z}$ do exist.
\end{lemma}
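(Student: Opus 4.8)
The plan is to show that $\{h_t\}$ is itself a family of line singularities; once this is established, the existence of the L\^e numbers $\lambda^0_{h_t,\mathbf{z}}(\mathbf{0})$ and $\lambda^1_{h_t,\mathbf{z}}(\mathbf{0})$ will follow verbatim from the references already invoked for $\{f_t\}$ in the introduction (\cite[Remark 1.29]{M} and \cite[Proposition 1.23]{M}). The starting observation is that the term $t^{d-d_i}g_i(\mathbf{z})$ has total degree $(d-d_i)+d_i=d$, so $h(t,\mathbf{z})$ is an \emph{honest} homogeneous polynomial of degree $d$ in the $n+1$ variables $(t,z_1,\ldots,z_n)$. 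In particular, for $t\neq 0$ one has the scaling identity $h_t(\mathbf{z})=t^{d}\,h_1(\mathbf{z}/t)$, whence $\Sigma h_t=t\cdot\Sigma h_1$ and the transverse slice of $h_t$ is, up to the linear change $\mathbf{z}\mapsto\mathbf{z}/t$, that of $h_1$; moreover $h_0=f_0$ is a line singularity by hypothesis.

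The key step is to transfer the line-singularity structure of $\{f_t\}$ to $\{h_t\}$. Because $\Sigma f_t$ is the $z_1$-axis, every partial derivative $\partial_{z_j}f_t$ vanishes identically along it; writing $\partial_{z_j}g_i(z_1,0,\ldots,0)=\gamma_{ij}z_1^{d_i-1}$ and using $\partial_{z_j}f_0\equiv 0$ on the $z_1$-axis, this reads $\sum_{i}\gamma_{ij}\,p_i(t)\,z_1^{d_i-1}\equiv 0$. Comparing powers of $z_1$ forces $\sum_{i:\,d_i=\delta}\gamma_{ij}\,p_i(t)\equiv 0$ for every $\delta$, and since $h$ is precisely the part of $f$ of lowest weight $d$ for the weighting $\mathrm{wt}(t)=\nu/\kappa$, $\mathrm{wt}(z_\ell)=1$, the indices $i\le m$ with $d_i=\delta$ are exactly those realizing the lowest power $t^{k_i}$ in that sum (they share the common value $k_i=\kappa(d-\delta)/\nu$). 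Reading off the coefficient of this lowest power gives $\sum_{i\le m:\,d_i=\delta}\gamma_{ij}a_i=0$, which is exactly what is needed to conclude that $\partial_{z_j}h_t=\sum_{i\le m}a_i t^{d-d_i}\gamma_{ij}z_1^{d_i-1}$ vanishes identically on the $z_1$-axis. Hence the $z_1$-axis is contained in $\Sigma h_t$ for all small $t$. The same computation applied to $h_t|_{V(z_1)}$---which is homogeneous in $(t,z_2,\ldots,z_n)$ with $t=0$ member $f_0|_{V(z_1)}$ an isolated singularity---together with upper-semicontinuity of the fibre dimension of the relative critical locus, shows that $h_t|_{V(z_1)}$ has an isolated singularity at $\mathbf{0}$ for all small $t$.

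It remains to check that $\Sigma h_t$ contains \emph{nothing} beyond the $z_1$-axis near $\mathbf{0}$, and this is the step I expect to be the main obstacle. The relative critical locus $\Sigma_{\mathrm{rel}}h=V(\partial_{z_1}h,\ldots,\partial_{z_n}h)$ is a cone in $\mathbb{C}^{n+1}$ whose slice at $t=0$ is $\Sigma f_0$, i.e.\ the $z_1$-axis; cutting by $\{t=0\}$ can drop the dimension by at most one, so $\dim\Sigma_{\mathrm{rel}}h\le 2$, and we have just shown that it contains the $2$-dimensional $(t,z_1)$-plane. The delicate point is to exclude a second $2$-dimensional conical component $W$, necessarily with $W\cap\{t=0\}$ equal to the $z_1$-axis, whose slices $W\cap\{t=t_0\}$ would be curves in $\Sigma h_{t_0}$ transverse to $V(z_1)$; such components are \emph{not} ruled out by the isolatedness of the slice singularity alone. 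I would attack this using the conical structure together with the scaling $\Sigma h_{t_0}=t_0\cdot\Sigma h_1$: the flat limit of such slice-curves as $t_0\to 0$ must lie in $\Sigma f_0$, which constrains their tangent directions, and combined with the prepolarity of $V(z_1)$ this should force $W$ to coincide with the $(t,z_1)$-plane. Once $\Sigma h_t$ is secured to be the $z_1$-axis, $\{h_t\}$ is a family of line singularities, $V(z_1)$ is a prepolar slice, and the cited results of Massey yield the existence of the L\^e numbers.
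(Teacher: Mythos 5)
Your proposal aims at a statement strictly stronger than the lemma, and the one step you yourself flag as the ``main obstacle''---excluding a second two-dimensional conical component $W$ of the relative critical locus whose slices $W\cap\{t=t_0\}$ would be extra singular curves of $h_{t_0}$ through $\mathbf{0}$ transverse to $V(z_1)$---is left genuinely unproven: the flat-limit/tangent-direction argument is only a plausibility sketch (``should force $W$ to coincide with the $(t,z_1)$-plane''), and, as you correctly note, isolatedness of the transverse slice singularity does not rule such components out. So as a proof that $\{h_t\}$ is a family of line singularities, the argument has a real gap. But that stronger claim is not needed. The lemma asserts only the \emph{existence} of $\lambda^0_{h_t,\mathbf{z}}(\mathbf{0})$ and $\lambda^1_{h_t,\mathbf{z}}(\mathbf{0})$, and by \cite[Remark 1.29 and Proposition 1.23]{M} this requires only the dimension bounds $\dim_{\mathbf{0}}\Sigma h_t\leq 1$ and $\dim_{\mathbf{0}}\Sigma\bigl({h_t}_{\mid V(z_1)}\bigr)\leq 0$; it does not require $\Sigma h_t$ to equal the $z_1$-axis. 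This is exactly how the paper argues, in two lines: since $h_0=f_0$, the set $\Sigma h_0$ is the $z_1$-axis and $\Sigma\bigl({h_0}_{\mid V(z_1)}\bigr)=\{\mathbf{0}\}$, so by upper semicontinuity of the local dimension of the critical loci in the family both bounds hold for all small $t$, and Massey's results then give the existence of the two L\^e numbers.

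The ironic part is that your own intermediate steps already contain everything needed. Your observation that $h$ is homogeneous of degree $d$ in $(t,\mathbf{z})$, so that $\Sigma_{\mathrm{rel}}h$ is a cone in $\mathbb{C}^{n+1}$ whose $t=0$ slice is the $z_1$-axis, gives $\dim\Sigma_{\mathrm{rel}}h\leq 2$ and hence $\dim_{\mathbf{0}}\Sigma h_{t}\leq 1$ for every small $t$ (a conical component of dimension at most $2$ meets the affine hyperplane $\{t=t_0\}$, $t_0\neq 0$, in dimension at most $1$); and your semicontinuity argument for the slice gives $\dim_{\mathbf{0}}\Sigma\bigl({h_t}_{\mid V(z_1)}\bigr)\leq 0$. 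Stopping there and invoking \cite[Remark 1.29 and Proposition 1.23]{M} proves the lemma. The weighted-lowest-part computation showing that the $z_1$-axis is contained in $\Sigma h_t$ (extracting the coefficient of the lowest power $t^{\kappa(d-\delta)/\nu}$ in $\sum_{i:\,d_i=\delta}\gamma_{ij}p_i(t)\equiv 0$) is correct---the indices $i\leq m$ with $d_i=\delta$ do realize that lowest power---but it is superfluous here, as is the entire attempt to pin down $\Sigma h_t$ exactly.
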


\begin{proof}
As $h_0=f_0$, the singular set $\Sigma h_0$ is given by the $z_1$-axis while the singular set $\Sigma ({h_0}_{\mid V(z_1)})$ reduces to the origin. This implies that for all $t$ sufficiently small, $\dim_{\mathbf{0}}\Sigma h_t\leq 1$ and $\dim_{\mathbf{0}}\Sigma ({h_t}_{\mid V(z_1)})\leq 0$. Now, it follows from \cite[Remark 1.29 and Proposition 1.23]{M} that the L\^e numbers $\lambda^0_{h_t,\mathbf{z}}(\mathbf{0})$ and $\lambda^1_{h_t,\mathbf{z}}(\mathbf{0})$ are defined. 
\end{proof}

\begin{lemma}\label{gtnlc}
The family $\{h_t\}$ is not $\lambda_{\mathbf{z}}$-constant. 
\end{lemma}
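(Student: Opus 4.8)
The plan is to argue by contradiction, using Massey's theorem (Theorem~\ref{Massey}) in exactly the role the L\^e--Saito theorem plays in the Gabri\`elov--Ku\v{s}nirenko argument. Suppose that $\{h_t\}$ \emph{were} $\lambda_{\mathbf{z}}$-constant. By Lemma~\ref{clnde} the relevant L\^e numbers of $h_t$ exist, so Theorem~\ref{Massey} would force the $t$-axis to satisfy Thom's $a_f$ condition for $h$ at the origin. I will contradict this by exhibiting a single sequence $\mathbf{p}_k\to(0,\mathbf{0})$ off $\Sigma h$ for which $\lim_k T_{\mathbf{p}_k}V(h-h(\mathbf{p}_k))$ does not contain $\mathbb{C}\times\{\mathbf{0}\}$. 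The engine of the argument is the fact that, by its very construction, $h$ is homogeneous of degree $d$ in the $n+1$ variables $(t,\mathbf{z})$ once $t$ and each $z_j$ are given weight $1$: indeed $f_0(\mathbf{z})$ has degree $d$ and each term $a_i t^{d-d_i}g_i(\mathbf{z})$ has degree $(d-d_i)+d_i=d$. Consequently each partial derivative of $h$ is homogeneous of degree $d-1$, so $dh_{\lambda(1,\mathbf{w})}=\lambda^{d-1}\,dh_{(1,\mathbf{w})}$ for every scalar $\lambda$ and every $\mathbf{w}\in\mathbb{C}^n$.

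First I would check that $h$ genuinely depends on $t$. Writing the lower-degree part $f-f_0$ as a combination of pairwise distinct monomials (which we may, after expansion), the leading part $\sum_{1\le i\le m}a_i t^{d-d_i}g_i(\mathbf{z})$ is not identically zero: $m\ge 1$ because $\{f_t\}$ is assumed not equimultiple, the $a_i$ are nonzero leading coefficients, and each $d_i<d$ since $\kappa(d-d_i)/k_i=\nu>0$. As every exponent $d-d_i$ is at least $1$, we get $\partial_t h(1,\mathbf{z})=\sum_{1\le i\le m}a_i(d-d_i)g_i(\mathbf{z})\not\equiv 0$, so there is a point $\mathbf{w}\in\mathbb{C}^n$ with $\partial_t h(1,\mathbf{w})\ne 0$. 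I then take $\mathbf{p}_k:=t_k\cdot(1,\mathbf{w})=(t_k,t_k\mathbf{w})$ with $t_k\to 0$, $t_k\ne 0$, which converges to $(0,\mathbf{0})$.

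The homogeneity now does all the work. Because $dh_{\mathbf{p}_k}=t_k^{\,d-1}\,dh_{(1,\mathbf{w})}$, the point $\mathbf{p}_k$ is a regular point of $h$ (its $t$-partial is $t_k^{\,d-1}\partial_t h(1,\mathbf{w})\ne 0$), hence $\mathbf{p}_k\notin\Sigma h$, and the tangent space $T_{\mathbf{p}_k}V(h-h(\mathbf{p}_k))=\ker dh_{\mathbf{p}_k}$ is the \emph{fixed} hyperplane $\ker dh_{(1,\mathbf{w})}$, independent of $k$. Thus the limit $T$ of these tangent spaces is $\ker dh_{(1,\mathbf{w})}$ itself, and the $t$-axis direction $(1,\mathbf{0})$ belongs to $T$ only if $dh_{(1,\mathbf{w})}(1,\mathbf{0})=\partial_t h(1,\mathbf{w})=0$ --- which it is not. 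Therefore $\mathbb{C}\times\{\mathbf{0}\}\not\subseteq T$, Thom's $a_f$ condition fails for $h$ along the $t$-axis, and this contradicts Theorem~\ref{Massey}. Hence $\{h_t\}$ cannot be $\lambda_{\mathbf{z}}$-constant.

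The one point that I expect to require genuine care --- rather than the routine homogeneity bookkeeping above --- is guaranteeing that the extraction of the leading part has not silently collapsed $h$ back to $f_0$, that is, that $\partial_t h\not\equiv 0$. This is exactly why it is worth insisting that the $g_i$ be pairwise distinct monomials before forming $h$: with that normalization no cancellation among the surviving top-ratio summands is possible, and the non-vanishing of $\partial_t h$ is immediate. Everything else --- the regularity of the $\mathbf{p}_k$, the constancy of the tangent planes, and the exclusion of the $t$-axis --- then follows formally from the weight-$1$ homogeneity of $h$.
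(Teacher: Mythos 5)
Your proof is correct and takes essentially the same approach as the paper: both derive a contradiction with Massey's theorem (Theorem~\ref{Massey}) by testing Thom's $a_f$ condition along the ray through $(1,\mathbf{z}_0)$ and exploiting the degree-$d$ homogeneity of $h$ in $(t,\mathbf{z})$ --- the paper phrases the failure as the violation of the order inequality $\operatorname{ord}_0(\partial h/\partial t\circ\gamma)>\inf_{1\leq j\leq n}\operatorname{ord}_0(\partial h/\partial z_j\circ\gamma)$ along the curve $\gamma(u)=(u,u\mathbf{z}_0)$, while you verify the sequence formulation of $a_f$ directly via the constancy of the tangent hyperplanes $\ker dh_{(t_k,t_k\mathbf{w})}=\ker dh_{(1,\mathbf{w})}$, which is the same computation. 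Your explicit normalization of the $g_i$ as pairwise distinct monomials, guaranteeing $\sum_{1\leq i\leq m}a_i(d-d_i)g_i\not\equiv 0$, makes rigorous a point the paper leaves implicit in its choice of $\mathbf{z}_0$.
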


\begin{proof}
Otherwise, by Theorem \ref{Massey}, for any holomorphic curve 
\begin{align*}
\gamma\colon (\mathbb{C},0) &\to (\mathbb{C} \times \mathbb{C}^n, (0,\mathbf{0}))\\
u&\mapsto\gamma(u)
\end{align*}
not contained in $\Sigma h$ in an arbitrarily small neighbourhood of the origin, we have
\begin{equation}\label{ordre}
\mbox{ord}_0 \biggl(\frac{\partial h}{\partial t} \circ\gamma \biggr) >
\inf_{1\leq j\leq n} \mbox{ord}_0 \biggl(\frac{\partial h}{\partial z_j} \circ\gamma \biggr),
\end{equation}
where $\mbox{ord}_0(\cdot)$ is the order at $0$ with respect to $u$.
However, if we take $\gamma(u):=(u,u\mathbf{z}_0)$, 
where $\mathbf{z}_0$ is such that 
\begin{align*}
\sum_{1\leq i\leq m} a_i (d-d_i) g_i(\mathbf{z}_0) \not=0,
\end{align*} 
then
\begin{align*}
\frac{\partial h}{\partial t} \circ \gamma(u) 
= \sum_{1\leq i\leq m} a_i (d-d_i) u^{d-d_i-1} g_i(u\mathbf{z}_0) 
= u^{d-1} \sum_{1\leq i\leq m} a_i (d-d_i)  g_i(\mathbf{z}_0),
\end{align*}
while, for all $1\leq j\leq n$,
\begin{align*}
\frac{\partial h}{\partial z_j} \circ \gamma(u) 
& = \frac{\partial f_0}{\partial z_j}(u\mathbf{z}_0) + 
\sum_{1\leq i\leq m} a_i u^{d-d_i}  
\frac{\partial g_i}{\partial z_j}(u\mathbf{z}_0) \\
& = u^{d-1} \biggl( \frac{\partial f_0}{\partial z_j}(\mathbf{z}_0) + 
\sum_{1\leq i\leq m} a_i \frac{\partial g_i}{\partial z_j}(\mathbf{z}_0) \biggr).
\end{align*}
Therefore, for this particular curve, the inequality (\ref{ordre}) is not satisfied.
\end{proof}

Lemmas \ref{clnde} and \ref{gtnlc} show that there exists a sequence $\{t_q\}$ of points in $\mathbb{C}\setminus \{0\}$ approaching the origin such that for all sufficiently large $q$ either 
$\lambda^{0}_{h_0,\mathbf{z}}(\mathbf{0}) \not=
\lambda^{0}_{h_{t_q},\mathbf{z}}(\mathbf{0})$
or $\lambda^{1}_{h_0,\mathbf{z}}(\mathbf{0}) \not=
\lambda^{1}_{h_{t_q},\mathbf{z}}(\mathbf{0}).$
Combined with the lexigraphically upper-semicontinuity (in the variable $t$) of the pair of L\^e numbers 
$(\lambda^{1}_{h_{t},\mathbf{z}}(\mathbf{0}), \lambda^{0}_{h_{t},\mathbf{z}}(\mathbf{0}))$, it follows that for any sufficiently large $q$, either
\begin{equation}\label{ifh1}
\lambda^{1}_{h_0,\mathbf{z}}(\mathbf{0}) >
\lambda^{1}_{h_{t_q},\mathbf{z}}(\mathbf{0})
\end{equation}
or
\begin{equation}\label{ifh2}
\lambda^{1}_{h_0,\mathbf{z}}(\mathbf{0}) =
\lambda^{1}_{h_{t_q},\mathbf{z}}(\mathbf{0})
\mbox{ and }
\lambda^{0}_{h_0,\mathbf{z}}(\mathbf{0}) >
\lambda^{0}_{h_{t_q},\mathbf{z}}(\mathbf{0}).
\end{equation}

For each $q$, choose a root $\tau_q$ of the polynomial $t^\kappa-t_q$, and for any $s\in\mathbb{C}$, $\mathbf{z}\in\mathbb{C}^n$, consider the function
\begin{equation*}
\ell^q(s,\mathbf{z}):=
\left\{
\begin{aligned}
& s^{-\kappa d} f((s\tau_q)^\nu, s^\kappa \mathbf{z}) &\mbox{for} && s\not=0,\\
& h_{t_q}(\mathbf{z}) &\mbox{for} && s=0.
\end{aligned}
\right.
\end{equation*}
It is easy to see that $\ell^q(s,\mathbf{z})$ is a polynomial in the variables $s,\mathbf{z}$. Indeed, for $m+1\leq i\leq r$, the integer $n_i:=\nu k_i-\kappa(d-d_i)$ is positive. Now, if we set $n_i:=0$ for $1\leq i\leq m$, then $\ell^q(s,\mathbf{z})$ is the polynomial given by
\begin{align*}
f_0(\mathbf{z}) + \sum_{1\leq i\leq r} (a_i s^{n_i} \tau_q^{\nu k_i} + b_i s^{n_i+\nu} \tau_q^{\nu(k_i+1)} + \mbox{higher-order terms}) g_i(\mathbf{z}).
\end{align*}
Write $\ell^q_{s}(\mathbf{z}):=\ell^q(s,\mathbf{z})$, and consider the $1$-parameter deformation family $\{\ell^q_{s}\}$ with respect to the parameter $s$. As $\ell^q_{0}=h_{t_q}$, for $q$ large enough we have $\dim_\mathbf{0}\Sigma \ell^q_{0}\leq 1$ (see the proof of Lemma \ref{clnde}). Furthermore, as the L\^e numbers are unchanged if we multiply the coordinates $\mathbf{z}$ by a non-zero constant, for any sufficiently large $q$ and any sufficiently small $s\not=0$ we have:
\begin{equation}\label{eln}
\lambda^{k}_{\ell^q_{s},\mathbf{z}}(\mathbf{0}) =
\lambda^{k}_{f_{(s\tau_q)^\nu},\mathbf{z}}(\mathbf{0})
\quad\mbox{for}\quad k\in\{0,1\}.
\end{equation}
Combined with the following lemma, the relation (\ref{eln}) shows that the family $\{f_t\}$ is not $\lambda_{\mathbf{z}}$-constant---a contradiction. (We recall that $h_0=f_0$.)

\begin{lemma}\label{claim}
For any sufficiently large $q$ and any sufficiently small $s\not=0$, either 
$\lambda^{1}_{h_0,\mathbf{z}}(\mathbf{0}) > 
\lambda^{1}_{\ell^q_{s},\mathbf{z}}(\mathbf{0})$
or
$\lambda^{0}_{h_0,\mathbf{z}}(\mathbf{0}) > 
\lambda^{0}_{\ell^q_{s},\mathbf{z}}(\mathbf{0}).$
\end{lemma}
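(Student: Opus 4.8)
The plan is to obtain the two alternatives of Lemma~\ref{claim} by a single lexicographic comparison: I would chain the inequality between $h_0$ and $h_{t_q}$ that we have already extracted from Lemmas~\ref{clnde} and \ref{gtnlc} with a lexicographic upper-semicontinuity estimate relating $\ell^q_s$ (for small $s\neq 0$) to its special fibre $\ell^q_0=h_{t_q}$. The first observation is purely bookkeeping: the two cases \eqref{ifh1} and \eqref{ifh2} say, taken together, exactly that for all sufficiently large $q$,
\begin{equation*}
\bigl(\lambda^{1}_{h_0,\mathbf{z}}(\mathbf{0}),\lambda^{0}_{h_0,\mathbf{z}}(\mathbf{0})\bigr)
>_{\mathrm{lex}}
\bigl(\lambda^{1}_{h_{t_q},\mathbf{z}}(\mathbf{0}),\lambda^{0}_{h_{t_q},\mathbf{z}}(\mathbf{0})\bigr),
\end{equation*}
since the first alternative forces a strict inequality in the leading entry and the second a tie in the leading entry with a strict inequality in the trailing one.

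Next I would apply lexicographic upper-semicontinuity (cf.~\cite[Corollary~4.16]{M}) to the family $\{\ell^q_s\}$ at $s=0$. For this I first need the L\^e numbers of $\ell^q_s$ to exist not only at $s=0$ but for all small $s$. Since $\ell^q_0=h_{t_q}$ satisfies $\dim_{\mathbf{0}}\Sigma\ell^q_0\leq 1$ for $q$ large, and since the restriction of $\ell^q_0$ to $V(z_1)$ has at most an isolated singularity at $\mathbf{0}$, the same semicontinuity-of-dimension argument as in the proof of Lemma~\ref{clnde} yields $\dim_{\mathbf{0}}\Sigma\ell^q_s\leq 1$ and $\dim_{\mathbf{0}}\Sigma(\ell^q_s{}_{\mid V(z_1)})\leq 0$ for all sufficiently small $s$, so that $\lambda^0_{\ell^q_s,\mathbf{z}}(\mathbf{0})$ and $\lambda^1_{\ell^q_s,\mathbf{z}}(\mathbf{0})$ are defined. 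With the L\^e numbers in place, upper-semicontinuity of the pair $(\lambda^1,\lambda^0)$ in the parameter $s$ gives, for all small $s$,
\begin{equation*}
\bigl(\lambda^{1}_{\ell^q_s,\mathbf{z}}(\mathbf{0}),\lambda^{0}_{\ell^q_s,\mathbf{z}}(\mathbf{0})\bigr)
\leq_{\mathrm{lex}}
\bigl(\lambda^{1}_{\ell^q_0,\mathbf{z}}(\mathbf{0}),\lambda^{0}_{\ell^q_0,\mathbf{z}}(\mathbf{0})\bigr)
=
\bigl(\lambda^{1}_{h_{t_q},\mathbf{z}}(\mathbf{0}),\lambda^{0}_{h_{t_q},\mathbf{z}}(\mathbf{0})\bigr),
\end{equation*}
the last equality being just $\ell^q_0=h_{t_q}$.

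Finally I would combine the two displays by transitivity of the lexicographic order, obtaining, for all sufficiently large $q$ and sufficiently small $s\neq 0$,
\begin{equation*}
\bigl(\lambda^{1}_{h_0,\mathbf{z}}(\mathbf{0}),\lambda^{0}_{h_0,\mathbf{z}}(\mathbf{0})\bigr)
>_{\mathrm{lex}}
\bigl(\lambda^{1}_{\ell^q_s,\mathbf{z}}(\mathbf{0}),\lambda^{0}_{\ell^q_s,\mathbf{z}}(\mathbf{0})\bigr).
\end{equation*}
By definition of the lexicographic order this means that either $\lambda^{1}_{h_0,\mathbf{z}}(\mathbf{0})>\lambda^{1}_{\ell^q_s,\mathbf{z}}(\mathbf{0})$, or else $\lambda^{1}_{h_0,\mathbf{z}}(\mathbf{0})=\lambda^{1}_{\ell^q_s,\mathbf{z}}(\mathbf{0})$ and $\lambda^{0}_{h_0,\mathbf{z}}(\mathbf{0})>\lambda^{0}_{\ell^q_s,\mathbf{z}}(\mathbf{0})$; in either case one of the two inequalities asserted by the lemma holds. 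The one genuine obstacle here is the middle step, namely legitimately invoking upper-semicontinuity for the $s$-family, which hinges on verifying that the L\^e numbers of $\ell^q_s$ are defined for small $s$; once that is secured, the remainder is a routine order-theoretic manipulation, and the strictness we need is built into the choice of the sequence $\{t_q\}$ provided by Lemmas~\ref{clnde} and \ref{gtnlc}.
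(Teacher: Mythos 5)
Your proposal is correct and is essentially the paper's own proof: the paper combines \eqref{ifh1}/\eqref{ifh2} with the lexicographic upper-semicontinuity of the pair $(\lambda^{1}_{\ell^q_{s},\mathbf{z}}(\mathbf{0}),\lambda^{0}_{\ell^q_{s},\mathbf{z}}(\mathbf{0}))$ at $s=0$ and the identity $\ell^q_{0}=h_{t_q}$, merely writing out as an explicit case analysis (cases \eqref{case-1} and \eqref{case-2}) the same transitivity of the lexicographic order that you invoke in packaged form. Your added verification that the L\^e numbers of $\ell^q_{s}$ exist for small $s\neq 0$ is sound and matches the paper's implicit handling of this point (secured there by the observation $\dim_{\mathbf{0}}\Sigma\ell^q_{0}\leq 1$ together with the scaling relation \eqref{eln} identifying $\lambda^{k}_{\ell^q_{s},\mathbf{z}}(\mathbf{0})$ with $\lambda^{k}_{f_{(s\tau_q)^\nu},\mathbf{z}}(\mathbf{0})$).
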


\begin{proof}
The lexigraphically upper-semicontinuity (in the variable~$s$) of the pair of L\^e numbers $(\lambda^{1}_{\ell^q_{s},\mathbf{z}}(\mathbf{0}), \lambda^{0}_{\ell^q_{s},\mathbf{z}}(\mathbf{0}))$ shows that for all large $q$ and all small $s\not=0$, either
\begin{align}\label{case-1}
\lambda^{1}_{\ell^q_{0},\mathbf{z}}(\mathbf{0}) >
\lambda^{1}_{\ell^q_{s},\mathbf{z}}(\mathbf{0})
\end{align}
or
\begin{align}\label{case-2}
\lambda^{1}_{\ell^q_{0},\mathbf{z}}(\mathbf{0}) =
\lambda^{1}_{\ell^q_{s},\mathbf{z}}(\mathbf{0}) \mbox{ and }
\lambda^{0}_{\ell^q_{0},\mathbf{z}}(\mathbf{0}) \geq
\lambda^{0}_{\ell^q_{s},\mathbf{z}}(\mathbf{0}).
\end{align}
Furthermore, we know that for all large $q$, either (\ref{ifh1}) or (\ref{ifh2}) holds. If (\ref{ifh1}) holds, then, as either (\ref{case-1}) or (\ref{case-2}) holds too, we have:
\begin{equation*}
\lambda^{1}_{h_0,\mathbf{z}}(\mathbf{0}) >
\lambda^{1}_{h_{t_q},\mathbf{z}}(\mathbf{0}) =
\lambda^{1}_{\ell^q_{0},\mathbf{z}}(\mathbf{0}) \geq
\lambda^{1}_{\ell^q_{s},\mathbf{z}}(\mathbf{0}).
\end{equation*}
If (\ref{ifh2}) holds, then either
\begin{equation*}
\lambda^{1}_{h_0,\mathbf{z}}(\mathbf{0}) =
\lambda^{1}_{h_{t_q},\mathbf{z}}(\mathbf{0}) =
\lambda^{1}_{\ell^q_{0},\mathbf{z}}(\mathbf{0}) >
\lambda^{1}_{\ell^q_{s},\mathbf{z}}(\mathbf{0})
\end{equation*}
(when (\ref{case-1}) holds), or
\begin{equation*}
\lambda^{0}_{h_0,\mathbf{z}}(\mathbf{0}) >
\lambda^{0}_{h_{t_q},\mathbf{z}}(\mathbf{0}) =
\lambda^{0}_{\ell^q_{0},\mathbf{z}}(\mathbf{0}) \geq 
\lambda^{0}_{\ell^q_{s},\mathbf{z}}(\mathbf{0})
\end{equation*}
(when (\ref{case-2}) holds).
\end{proof}

\bibliographystyle{amsplain}

\begin{thebibliography}{10}

\bibitem {Bo1} J. Fern\'andez de Bobadilla, \textit{Answers to some equisingularity questions}, Invent. Math. \textbf{161} (2005), no. 3, 657--675.

\bibitem {Bo2} J. Fern\'andez de Bobadilla, \textit{Topological equisingularity of hypersurfaces with $1$-dimensional critical set}, Adv. Math. \textbf{248} (2013) 1199--1253.

\bibitem {BG} J. Fern\'andez de Bobadilla and T. Gaffney,  \textit{The L\^e numbers of the square of a function and their applications}, J. Lond. Math. Soc.~(2) \textbf{77} (2008), no. 3, 545--557.

\bibitem {E2} C. Eyral,  \textit{Zariski's multiplicity question---a survey}, New Zealand J. Math. \textbf{36} (2007) 253--276.

\bibitem {EyBook} C. Eyral,  \textit{Topics in equisingularity theory} (to appear).

\bibitem {ER1} C. Eyral and M. Ruas, \textit{Deformations with constant L\^e numbers and multiplicity of nonisolated hypersurface singularities},  Nagoya Math. J. \textbf{218} (2015) 29--50.

\bibitem {ER2} C. Eyral and M. Ruas, \textit{Topological triviality of linear deformations with constant L\^e numbers} (to appear)

\bibitem{GK} A. M. Gabri\`elov and A. G. Ku\v{s}nirenko, \textit{Description of deformations with constant Milnor number for homogeneous functions}, Funkcional. Anal. i Prilo\v{z}en \textbf{9} (1975), no. 4, 67--68 (Rusian). English translation: Functional Anal. Appl. \textbf{9} (1975), no. 4, 329--331.

\bibitem{H} H. Hironaka,  \textit{Normal cones in analytic Whitney
stratifications}, Inst. Hautes \'Etudes Sci. Publ. Math. \textbf{36}
(1969) 127--138.

\bibitem{LS} L\^e D\~ung Tr\'ang and K. Saito, \textit{La constance du nombre de
Milnor donne des bonnes stratifications}, C. R. Acad. Sci. Paris
S\'er.~A-B \textbf{277} (1973) 793--795.

\bibitem {LR} L\^e D\~ung Tr\'ang and C. P. Ramanujam, \textit{The invariance of Milnor number implies the invariance of the topological type}, Amer.~J.~Math. \textbf{98} (1976) 67--78.

\bibitem{M7} D. Massey,  \textit{The L\^e-Ramanujam problem for hypersurfaces with one-dimensional singular sets}, Math. Ann. \textbf{282} (1988), no. 1, 33-49.

\bibitem{M6} D. Massey,  \textit{The Thom condition along a line}, Duke Math. J. \textbf{60} (1990), no. 3, 631--642.

\bibitem {M3} D. Massey, \textit{The L\^e varieties, I}, Invent. Math.~\textbf{99} (1990), no. 2, 357--376.

\bibitem {M4} D. Massey, \textit{The L\^e varieties, II}, Invent. Math.~\textbf{104} (1991), no. 1, 113--148.

\bibitem {M} D. Massey, \textit{L\^e cycles and hypersurface singularities}, Lecture Notes Math.~\textbf{1615}, Springer-Verlag, Berlin, 1995.

\bibitem{M2} D. Massey, \textit{Numerical control over complex analytic singularities}, Mem. Amer. Math. Soc. \textbf{163} (2003), no. 778.  

\bibitem {T} B. Teissier, \textit{Cycles \'evanescents, sections planes et conditions de Whitney}, Singularit\'es \`a Carg\`ese (Rencontre Singularit\'es G\'eom. Anal., Inst. \'Etudes Sci., Carg\`ese, 1972),  pp. 285--362, Ast\'erisque \textbf{7} \& \textbf{8}, Soc. Math. France, Paris, 1973.

\bibitem {Z} O. Zariski, \textit{Some open questions in the theory of
singularities}, Bull. Amer. Math. Soc. \textbf{77} (1971) 481--491.

\end{thebibliography}

\end{document}